\providecommand{\abs}[1]{\lvert#1\rvert}
\DeclareMathOperator{\area}{area} 
\DeclareMathOperator{\vrt}{vert}  
\DeclareMathOperator{\hol}{hol}   
\newcommand{\Cb}{\mathbb{C}}      
\newcommand{\Rb}{\mathbb{R}}      
\newcommand{\Nb}{\mathbb{N}}      
\newcommand{\Zb}{\mathbb{Z}}      
\newcommand{\Pb}{\mathbb{P}}      
\newcommand{\SL}{\text{SL}(2,\Rb)} 
\newcommand{\GL}{\text{GL}(2,\Rb)} 
\newcommand{\Hf}{\mathcal{H}}     
\newcommand{\Qf}{\mathcal{Q}}     
\newcommand{\Hs}{\mathcal{H}}     
\newcommand{\Ls}{\mathscr{L}}     
\newcommand{\fix}{$\text{2T}_{\text{fix}}\text{2C}$}   
\theoremstyle{plain}
\newtheorem{thm}{Theorem}[section]
\newtheorem{prop}[thm]{Proposition}
\newtheorem{lem}[thm]{Lemma}
\newtheorem{cor}[thm]{Corollary}
\theoremstyle{definition}
\newtheorem*{df}{Definition}
\theoremstyle{remark}
\newtheorem*{rem}{Remark}
\begin{document}


\title[Minimality and nonergodicity]{Minimality and nonergodicity on a family of flat surfaces in genus 3}

\author{Emanuel Nipper}
\address{Universit\"at Bonn, Mathematisches Institut, Beringstra\ss{}e 1, 53115 Bonn, Germany}
\email{emanuel@math.uni-bonn.de}





\begin{abstract}
We prove that a certain family of flat surfaces in genus $3$ does not fulfill Veech's Dichotomy. These flat surfaces provide uncountably many minimal but nonergodic directions. The conditions on this family are a combinatorical one and an irrationality condition. The Arnoux-Yoccoz surface fulfills this conditions.
\end{abstract}

\maketitle

\setcounter{section}{0}



\section{Flat surfaces}

Suppose $(X,\omega)$ is a flat surface, i.e.~$X$ is a Riemann surface and $\omega$ is an abelian differential (or holomorphic one-form) on $X$. Integrating this differential gives rise to an atlas, which in turn leads to a metric euclidean outside the zeros of $\omega$ and with cone type singularities with angle $2\pi (k+1)$ at a zero of order $k$. For a given angle $\vartheta \in S^1$, there exists a vector field, defined on the complement of the zeroes of $\omega$, such that the flow lines of the vector field are leafs of the horizontal foliation of $e^{-i\vartheta}\omega$. A flow line of this vector field is called a \textit{saddle connection} in direction $\vartheta$ if it joins two singularities and has no singularity in its interior. There are countably many saddle connections on a given flat surface.
\newline
The group $\SL$ acts on the moduli space of flat surfaces by post-composition with the charts given by integrating $\omega$. Let $\text{SL}(X,\omega)$ be the stabilizer under $\SL$ of the flat structure on $(X,\omega)$, i.e.~$\text{SL}(X,\omega) = \{A \in \SL \,:\,$ there is an affine linear diffeomorphism $f: X \to X$ such that in moduli space $A(X,\omega) = (X,f_* \omega)\}$. We call a flat surface a \textit{Veech surface} if $\text{SL}(X,\omega) \subset \SL$ is a lattice.
\newline
We may ask for the dynamical properties of the directional flow on a flat surface $(X,\omega)$. W.~A.~Veech proved that Veech surfaces fulfill the \textit{Veech dichotomy}, i.e.~for any $\vartheta \in S^1$ the flow in this direction is either periodic or minimal and uniquely ergodic, \cite{Vee89a}. It is known that in genus 2 this is an equivalence, see C.~McMullen's paper \cite{McM05a}. There are some examples by J.~Smillie and B.~Weiss of non-Veech surfaces that fulfill the Veech dichotomy in genus 5 and higher, \cite{SmiWei06b}. We will try to spread some light on what happens in genus 3.
\newline
\textit{Acknowledgement.} I would like to thank U.~Hamenst\"adt for numerous discussions and a careful reading as well as Y.~Cheung, P.~Hubert and M.~M\"oller for numerous discussions on this topic.

\section{A family of surfaces in genus 3}

In \cite{HubLanMol07a} and \cite{HubLanMol06a} P.~Hubert, E.~Lanneau and M.~M\"oller are dealing with a special family of flat surfaces: so called 2T2C-surfaces. By definition these are flat surfaces of genus $3$ that admit a direction $\vartheta$, such that the saddle connections in direction $\vartheta$ decompose the flat surface into two tori $T_1$ and $T_2$ and two cylinders $C_1$ and $C_2$. Figure \ref{fig:2t2c} shows a 2T2C-surface.
\begin{figure}
	\input{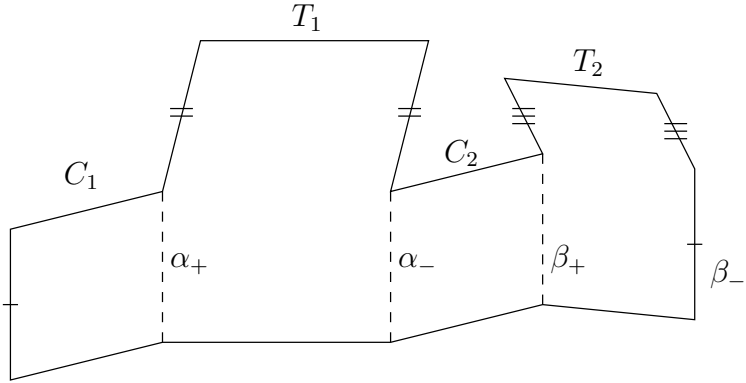}
	\caption{A 2T2C-splitting. Glueings are made as indicated and by vertical translations.}
	\label{fig:2t2c}
\end{figure}
\newline
Under certain circumstances, a 2T2C-surface is an element of the hyperelliptic locus $\Ls$ of the non-hyperelliptic component\footnote{More on connected components of the moduli space of abelian differentials can be found in Kontsevich's and Zorich's paper \cite{KonZor03a}.} $\Hs (2,2)^{odd}$, the set of all genus-3 abelian differentials with two zeroes of order two and with odd spin structure\footnote{We do not need properties of the spin structure explicitly. A definition can be found in, say, \cite{KonZor03a}.}. Namely, the condition for 2T2C-surfaces to be an element of the hyperelliptic locus can be phrased in the following way: If the two cylinders $C_1$ and $C_2$ represent the same class in the space of flat surfaces modulo isometries preserving the horizontal direction (\textit{marked isometries} for short), i.e.~if there is an isometry between $C_1$ and $C_2$ which maps horizontal lines to horizontal lines, then the flat surface is an element of $\Ls$. The same statement is true if the marked isometry classes of the two tori $T_1$ and $T_2$ coincide. According to \cite{HubLanMol06a} we will call a 2T2C-surface a \textit{\fix -surface} if the first condition holds, i.e.~the hyperelliptic involution fixes the two tori and exchanges the two cylinders. We call the according splitting a \textit{\fix -splitting}.
\newline
We formulate the main result of this note as a theorem:
\begin{thm} \label{thm_uncountable}
If a flat surface in genus 3 admits a \fix -splitting and if the direction of this splitting is nonperiodic in both tori, then there are uncountably many minimal but nonergodic directions on that surface.
\end{thm}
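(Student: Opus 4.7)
The plan is to establish nonergodicity by applying a Masur--Smillie type criterion: a direction $\vartheta^*$ on $(X,\omega)$ is nonergodic as soon as one can exhibit a sequence of separating simple closed curves $\gamma_n$ such that the component of $\hol(\gamma_n)$ transverse to $\vartheta^*$, multiplied by $|\hol(\gamma_n)|$, tends to $0$, while the two complementary subsurfaces keep areas bounded away from $0$ and from $\area(X)$. The whole proof therefore reduces to producing such families of cycles for an uncountable set of limit directions.

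Normalize the given \fix-direction $\vartheta_0$ to be vertical. By the fix condition the two cylinders $C_1, C_2$ are horizontally isometric, and $\alpha_+ + \alpha_-$ is already a purely vertical separating cycle bounding approximately half the area. I would then set up an inductive renormalization of \fix-splittings: given a \fix-splitting in a direction $\vartheta_n$ with tori $T_1^{(n)}, T_2^{(n)}$ and cylinders $C_1^{(n)}, C_2^{(n)}$, the hypothesis that $\vartheta_0$ is nonperiodic in both tori propagates (because the surgery at each stage is affine on each torus and the orbit of an irrational slope under the $\SL$-action stays irrational), so each $T_i^{(n)}$ contains saddle connections of arbitrarily small transverse length in many slopes. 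Pick paired short saddle connections $s_1 \subset T_1^{(n)}$ and $s_2 \subset T_2^{(n)}$ whose holonomies coincide, using the hyperelliptic involution to make this pairing canonical; then perform the surgery that cuts each torus along $s_i$, reshuffles the resulting pieces, and replaces the cylinders $C_i^{(n)}$ by new, thinner cylinders in a direction $\vartheta_{n+1}$ essentially determined by $\hol(s_i)$. The output is a new \fix-splitting in direction $\vartheta_{n+1}$, and the new separating boundary cycle $\gamma_{n+1}$ has holonomy exactly along $\vartheta_{n+1}$, of length comparable to $|\hol(s_i)|$.

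Running this induction along a tree of choices of short saddle connections produces uncountably many Cauchy sequences $\vartheta_0, \vartheta_1, \vartheta_2, \ldots$; distinct infinite branches, being separated at some finite stage, yield distinct limits $\vartheta^* = \lim \vartheta_n$. Each such $\vartheta^*$ inherits the sequence of separating cycles $\gamma_n$ with rapidly shrinking transverse defect, and the complementary areas stay balanced because each $\gamma_n$ bounds approximately one torus half of the surface; the Masur--Smillie criterion then yields nonergodicity of the flow in direction $\vartheta^*$. For minimality, the irrationality hypothesis guarantees that none of the $\vartheta_n$ is a saddle-connection direction on $(X,\omega)$, and passing to a limit that is chosen freely in an uncountable Cantor-type set avoids the countable set of saddle-connection directions, so the flow in direction $\vartheta^*$ has no periodic trajectory and is therefore minimal.

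The decisive technical obstacle is the quantitative renormalization step. One has to check that the hyperelliptic pairing of $s_1$ with $s_2$ can indeed be arranged for enough short saddle connections to generate an uncountable tree of branches, and that the fix-property is preserved exactly (not only approximately) by the surgery. Moreover, the estimate $|\hol(\gamma_n)|\cdot d(\vartheta^*,\gamma_n)\to 0$ must be extracted from the contraction properties of the induction, and this requires a quantitative version of the irrationality condition, controlling how small the shortest usable saddle connection in each $T_i^{(n)}$ can be made at stage $n$. This is where the bulk of the work will lie; the rest of the argument is then a direct application of the Masur--Smillie criterion.
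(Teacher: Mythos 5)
Your outline correctly identifies the target criterion, but the core of the argument is missing, and the mechanism you propose in its place cannot work as stated. You build each new splitting from ``paired short saddle connections $s_1\subset T_1^{(n)}$, $s_2\subset T_2^{(n)}$ whose holonomies coincide, using the hyperelliptic involution to make this pairing canonical.'' On a \fix-surface the hyperelliptic involution \emph{fixes} each of the two tori and exchanges the two cylinders, so it provides no pairing between $T_1$ and $T_2$ at all; and requiring literally equal holonomy vectors in the two lattices $\Lambda_1$ and $\Lambda_2$ is generically impossible, since these lattices need not share any nonzero vector (indeed the paper must distinguish the commensurable and strongly non-commensurable cases). The actual content of the theorem is exactly the step you defer as ``the decisive technical obstacle'': producing, at every stage, a triple $(v_1,v_2,v_c)\in\Lambda_1\times\Lambda_2\times\Lambda_c$ of nearly parallel vectors with cross products $\abs{v_i\times v_j}$ small compared with $\abs{v_i\times w}$ (``good partners''). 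The paper obtains this from a Ratner-type orbit-closure statement for the diagonal unipotent action on triples of unimodular lattices (via McMullen and Hubert--Lanneau--M\"oller), treated separately in the commensurable and non-commensurable cases, and then produces new splittings of the \emph{same} surface by Dehn twisting the splitting curves along a concatenated closed curve. Your ``cut, reshuffle, replace the cylinders by thinner cylinders'' surgery is not obviously a re-splitting of the fixed $(X,\omega)$ rather than a deformation of the flat structure, and even granting it, two nontrivial verifications remain which the paper carries out and you assert without argument: that the twisted curves are realized by single saddle connections, and that irrationality survives the twist (this is not automatic ``because the surgery is affine''; the paper needs an isogeny-of-lattices argument showing that among several candidate twists at least one stays irrational in each torus).

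A second, independent gap is your statement of the Masur--Smillie criterion. You require only that the transverse component of $\hol(\gamma_n)$ decay and that the complementary areas stay bounded away from $0$ and $\area(X)$. The criterion also needs summability of the exchanged areas, $\sum_n \area(T_i^n\Delta T_i^{n+1})<\infty$; without it no flow-invariant set of intermediate measure is produced in the limit and nonergodicity does not follow. Securing this summability is precisely why the inductive step must deliver, for every $\varepsilon>0$, new irrational \fix-splittings with exchanged area less than $\varepsilon$ (and small change of direction), which again rests on the good-partners construction you have not supplied.
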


We record one application of the above theorem:

\begin{cor} \label{cor_arnoux-yoccoz}
The Arnoux-Yoccoz surface in genus $3$ admits uncountably many minimal, nonergodic directions.
\end{cor}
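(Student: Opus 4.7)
The plan is to verify the two hypotheses of Theorem \ref{thm_uncountable} for the Arnoux--Yoccoz surface: namely, that it admits a \fix-splitting, and that this splitting direction is nonperiodic in both of its tori. Once these are in place, the corollary is immediate.

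First I would recall the Arnoux--Yoccoz construction. The surface is built from the unique real root $\alpha \in (0,1)$ of $x + x^2 + x^3 = 1$ and carries a pseudo-Anosov affine diffeomorphism with dilatation $1/\alpha$. Using the standard ``zippered rectangles'' or polygonal model for this surface, I would exhibit an explicit direction (for the usual presentation the horizontal one) in which the cylinder decomposition consists of two tori $T_1, T_2$ and two cylinders $C_1, C_2$ with the combinatorial gluing pattern of Figure \ref{fig:2t2c}. This part is mostly bookkeeping: one identifies how the rectangles of widths proportional to $\alpha$, $\alpha^2$ and $\alpha^3$ regroup into two subsurfaces of torus-type and two flat annuli.

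Next I would check that the splitting is of type \fix, i.e.\ that the two cylinders $C_1$ and $C_2$ are marked isometric. The symmetry of the Arnoux--Yoccoz construction under the hyperelliptic involution, together with the fact that the $C_i$'s sit symmetrically with respect to this involution (while the tori are each individually preserved), gives an isometry between $C_1$ and $C_2$ carrying horizontal leaves to horizontal leaves. Concretely, the widths and heights of $C_1$ and $C_2$ read off from the Arnoux--Yoccoz data agree, so the marked isometry class is the same.

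Finally, I would verify the nonperiodicity condition. In each torus $T_i$ the slope of the splitting direction (with respect to the natural lattice of $T_i$) is a linear expression in $\alpha$, $\alpha^2$, $\alpha^3$. Since $\alpha$ is an algebraic irrational of degree three, no nontrivial integer linear combination of these powers vanishes, so the slope in $T_i$ is irrational for $i=1,2$. Hence the flow in the splitting direction is not periodic in either torus. Applying Theorem \ref{thm_uncountable} yields uncountably many minimal but nonergodic directions on the Arnoux--Yoccoz surface. The main obstacle is the first, bookkeeping step: producing the explicit \fix-splitting requires matching the Arnoux--Yoccoz model to the 2T2C picture, which is not a one-line computation but is otherwise routine.
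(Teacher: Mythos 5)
Your reduction to Theorem \ref{thm_uncountable} is the same as the paper's, but the paper discharges the two hypotheses by citation --- Lemma 5.4 and Claim 5.5 of Hubert--Lanneau--M\"oller \cite{HubLanMol07a} give precisely the \fix -splitting and its nonperiodicity in both tori --- whereas you propose to re-derive these facts from the explicit Arnoux--Yoccoz model and then leave that derivation as ``bookkeeping''. That derivation \emph{is} the content of the corollary, and the specific claims you offer in its place would not go through as written. In the standard zippered-rectangle/polygonal presentation (the one with rectangles of widths proportional to $\alpha,\alpha^2,\alpha^3$), the pseudo-Anosov you invoke is affine with diagonal derivative, so the horizontal and vertical directions are its invariant foliations, hence minimal and uniquely ergodic; in particular the horizontal direction contains no saddle connections or closed geodesics and cannot be the splitting direction, which must carry the core curves of the cylinders $C_1$, $C_2$ and the four saddle connections $\alpha_\pm$, $\beta_\pm$. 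Locating the actual splitting direction, checking the gluing pattern of Figure \ref{fig:2t2c}, and verifying that $C_1$ and $C_2$ are marked isometric is exactly the nontrivial work done in Section 5 of \cite{HubLanMol07a}; it is not a routine regrouping of the standard rectangles.

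The irrationality step is also flawed as a template. Linear independence of $\alpha,\alpha^2,\alpha^3$ over $\mathbb{Q}$ does not imply that a slope given by integer combinations of these quantities is irrational: for instance $\alpha+\alpha^2+\alpha^3=1$ is rational. Whether the splitting direction is nonperiodic in $T_1$ and in $T_2$ depends on the explicit holonomy data of the splitting relative to the lattices $\Lambda_1$, $\Lambda_2$, and must be checked for the specific combinations that actually occur --- this is Claim 5.5 of \cite{HubLanMol07a}, and note that the paper needs nonperiodicity in \emph{both} tori, which is stronger than the irrationality notion used in \cite{HubLanMol07a} itself. So the proposal has a genuine gap: both hypotheses of Theorem \ref{thm_uncountable} are asserted rather than established, and the sketch supplied for them (horizontal direction; degree-three irrationality) fails on the details. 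Either cite \cite{HubLanMol07a} as the paper does, or actually exhibit the splitting direction and its holonomies and verify the two conditions explicitly.
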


\begin{proof}
Let $(X,\omega)$ be the genus-$3$ Arnoux-Yoccoz surface. Hubert, Lanneau and M\"oller examined the Teichm\"uller disc of the Arnoux-Yoccoz surface in their paper \cite{HubLanMol07a}. They proved that $(X,\omega)$ admits a \fix -splitting, nonperiodic in both tori (this is Lemma 5.4, especially Claim 5.5). Using the above result we conclude that there are uncountably many minimal, nonergodic directions on $(X,\omega)$.
\end{proof}

\begin{rem}
More on the Arnoux-Yoccoz surface can be found in \cite{ArnYoc06a}.
\end{rem}

To prove Theorem \ref{thm_uncountable}, we follow the ideas of Y.~Cheung and H.~Masur in \cite{CheMas06a} very closely. Recall their ideas: Let $(\tilde{X},\tilde{\omega}) \in \Hf(2)$ be an L-shaped genus-2 flat surface, i.e.~we are given a splitting of $(\tilde{X},\tilde{\omega})$ into one torus $\tilde{T}$ and one cylinder $\tilde{C}$. Suppose that the direction of the splitting is minimal in $\tilde{T}$. Inductively applying Dehn twists along wisely chosen simple closed curves leads to new splittings meeting the minimality condition again. To find these simple closed curves, Cheung and Masur exploit a theorem of McMullen, which in turn is based on Ratner's theorem. Moreover, they are able to control the area exchanged between two splittings, the angle between the directions of two splittings and the heights of the vectors giving the splittings. The minimality condition is an important ingredient in this step. Using these pieces of information about the generated splittings, a theorem of Masur and Smillie leads to nonergodic directions.\\
We will use the same strategy. We just have to make sure that \fix -surfaces (which in some sense look like double-L surfaces) behave in a way very much the same like Cheung's and Masur's L-shaped surfaces do.\\
Sections \ref{section_twist}, \ref{section_induction} and \ref{section_uncountable} contain the proof of Theorem \ref{thm_uncountable}.
\newline

Let $(X,\omega) \in \Ls$ be a \fix -surface and let $\pi$ be projection $X \to \Cb\Pb^1$ coming from the hyperelliptic involution. Let $\Qf(1,1,(-1)^6)$ be the stratum of quadratic differentials on $\Cb\Pb^1$ with two simple zeros and six simple poles. On $\Cb\Pb^1$ there exists a unique quadratic differential $q$ such that $(\Cb\Pb^1,q) \in \Qf(1,1,(-1)^6)$ and $\pi^*q = \omega^2$. Conversely let $(\Cb\Pb^1,q) \in \Qf(1,1,(-1)^6)$ be given. Pulling back $q$ to the two-sheeted cover $\pi:X\to\Cb\Pb^1$, branched at the simple poles of $q$, we get an Abelian differential $\omega$ with $\pi^*q = \omega^2$ and $(X,\omega) \in \Ls$. This gives a local $\GL$-equivariant isomorphism between $\Ls$ and $\Qf(1,1,(-1)^6)$. In \cite{Lan05a} E.~Lanneau has shown that $\Qf(1,1,(-1)^6)$ and $\Qf(1,1,1,1)$, the principle stratum of quadratic differentials in genus 2, are locally $\GL$-equivariant isomorphic, thus studying $\Ls$ arises naturally as it is the next easy case beyond abelian differentials in genus $2$. Both strata $\Qf(1,1,(-1)^6)$ and $\Qf(1,1,1,1)$ have complex dimension 6, and so has $\Ls$. The paper \cite{Lan05a} contains more information on these strata, including further references to Kontsevich, Masur, Smillie, Veech and Zorich.
\newline
Let $u_1, \dotsc, u_6$ be saddle connections as shown in Figure \ref{fig:coordinates}.
\begin{figure}
	\input{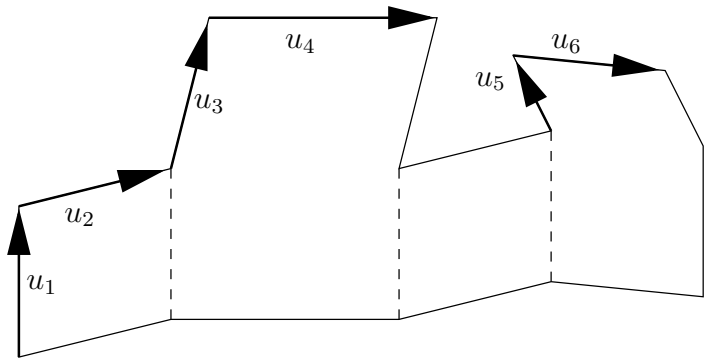}
	\caption{A six-tuple of saddle connections which serve as local coordinates.}
	\label{fig:coordinates}
\end{figure}
These saddle connections may serve as a set of local coordinates for a neighborhood of a \fix -surface $(X,\omega)$ in the hyperelliptic locus $\Ls$.
\newline
A result of H.~Masur and J.~Smillie (\cite{MasSmi91a}) states that in every stratum of quadratic differentials the Hausdorff dimension of the set of non-ergodic directions on a generic point is positive, hence there are uncountably many non-ergodic directions on a generic \fix -surface. Our result fits in this picture: Slightly deforming a \fix -surface in $\Ls$ (in other words: varying $u_1,\dotsc, u_6$ in an open neighborhood) does not destroy the property of being a \fix -surface, thus this property is an open condition and therefore the set of \fix -surfaces has positive (Lebesgue-) measure in $\Ls$. Hence \cite{MasSmi91a} gives uncountably many non-ergodic directions for almost all \fix -surfaces.
\newline
The prerequisites on the splitting-direction (i.e.~non-periodicity in the tori $T_1$ and $T_2$) locally rule out a countable union of real codimension-1-submanifolds: The direction of $u_1$ must not be the direction of any vector neither in the lattice spanned by $u_3$ and $u_4$ nor in the lattice spanned by $u_5$ and $u_6$ (see claim in proof of Corollary \ref{lem_twist_cheung}). For chosen $u_2, \dotsc, u_6$ both lattices exclude countably many directions for $u_1$. So we see that the set of admissible flat surfaces for our construction is locally the complement of a nullset. Thus we are in the generic case for \cite{MasSmi91a}.
\newline
In some sense Theorem \ref{thm_uncountable} actually gives more information than \cite{MasSmi91a}: we have a concrete description of the set of exceptional surfaces and therefore we can check whether a given surface meets our conditions or not, compare Corollary \ref{cor_arnoux-yoccoz}.


\section{Splittings and twisting a splitting} \label{section_twist}

Our task is to prove the theorem stated above. We will use an inductive process to construct the nonergodic directions. In this section we will collect some pieces of information on splittings. The following section is devoted to the inductive argument, which in turn will give the desired result in the last section. First, let us fix some notation.

\begin{df}
Consider a \fix -surface. Let $\alpha_+$, $\alpha_-$, $\beta_+$ and $\beta_-$ be the saddle connections that give the \fix-splitting into tori $T_1$, $T_2$ and cylinder $C_1$, $C_2$ as shown in Figure \ref{fig:2t2c}, i.e.~$\partial T_1 = \alpha_- - \alpha_+$ and $\partial T_2 = \beta_- - \beta_+$ (consider the geodesics to be running from bottom to top). Denote by $w$ the common holonomy of these saddle connections. Let $C$ be the common marked isometry class of $C_1$ and $C_2$. We will denote the splitting by $(T_1,T_2,C,w)$ for short.\\
The splitting is called irrational if the direction of $w$ is a minimal direction on both tori $T_1$ and $T_2$.\\
By $\Lambda_1$, $\Lambda_2$ and $\Lambda_c$ we will denote the lattices defining the tori $T_1$, $T_2$ and the marked isometry class $C$.
\end{df}

\begin{rem}
In \cite{HubLanMol07a}, a splitting is called irrational, if the direction is minimal in at least one torus. We need the stronger condition of nonperiodicity in both tori.
\end{rem}

\begin{rem}
In our notation, the hyperelliptic involution interchanges $\alpha_+$ and $\alpha_-$ as well as $\beta_+$ and $\beta_-$, see \cite{HubLanMol07a}.
\end{rem}

We need to speak about the oriented areas of parallelograms and about the areas of marked isometry classes of cylinders.

\begin{df}
Let $v \times w$ be the signed area of the parallelogram spanned by two vectors $v$ and $w$: the absolute value of $v \times w$ equals the euclidean area of the parallelogram and the sign is chosen to be positive if the pair $(v,w)$ is positively oriented, negative otherwise.\\
Let $\area(C)$ be the area of one and, hence, the common area of all representating cylinders of the marked isometry class $C$.
\end{df}

Given a tupel $(T_1,T_2,C,w)$ such that on a representating cylinder in $C$ there is a closed curve which length and direction equal the length and direction of $w$ and such that curves in direction of $w$ don't close up with length less or equal to $\abs{w}$ on $T_1$ and $T_2$, we can construct a pair $(X,\omega)$ of a genus-3 Riemann surfaces $X$ and an abelian differential $\omega$ on $X$ by slitting the tori $T_1$ and $T_2$ and two copies $C_1$ and $C_2$ of a cylinder in $C$ such that the holonomy of each slit equals $w$, and gluing the slitted surfaces according to the pattern shown in Figure \ref{fig:2t2c}. The flat structures on the tori and the cylinders coincide on the slits. The abelian differential $\omega$ on the resulting surface $X$ is given by these flat structures, compare K.~Strebel \cite[Paragraph 12.3]{Str84a}.
\newline
Let $v_1$, $v_2$ be the holonomy vectors of simple closed curves in $T_1$ and $T_2$, joining the initial point of the slit to itself, not intersecting the interior of the slit, and let $v_c := v_{c_1} = v_{c_2}$ be the common holonomy of curves in $C_1$ and $C_2$, joining the zero of $\omega$ on one boundary component of the cylinder to the second zero on the other boundary component. In Figure \ref{fig:2t2c}, these curves might be the bottom lines of the cylinders and tori, for instance. The parallelogram spanned by $v_j$ and $w$, $j \in \{1,2,c\}$, is isometrically embedded in the respective torus or cylinder, therefore $\abs{v_i \times w} \leq \area(T_i)$, $i \in \{1,2\}$, and $\abs{v_c \times w} \leq \area(C)$. If the simple closed curves concatenate to the core curve of a cylinder, they must have compatible orientations: $v_j \times w > 0$ for all $j \in \{1,2,c\}$ or $v_j \times w < 0$ for all $j \in \{1,2,c\}$.
\newline
Conversely, suppose that $v_1$, $v_2$, $v_c$ are three primitive vectors such that the conditions on area and orientation are satisfied. The area condition assures that on $C_1$ and $C_2$ there are curves with holonomy $v_c$, joining the zero of $\omega$ on one boundary component of the cylinder to the second zero on the other boundary component, and that on $T_j$, $j \in \{1,2\}$, there is a pair of simple closed curves with holonomy $v_j$ joining the initial/terminal point of the slit to itself. The orientation condition allows us to concatenate these curves. For given $k \in \Nb$, we get four new simple closed curves $\alpha_+^k$, $\alpha_-^k$, $\beta_+^k$ and $\beta_-^k$ by Dehn twisting $\alpha_+$, $\alpha_-$, $\beta_+$ and $\beta_-$ along the concatenated curve $k$ times. If we are lucky, each of the twisted curves can be realized by a single saddle connection (Lemma \ref{lem_twist_saddle}) with holonomy $w^k = w + k (v_1 + v_2 + 2v_c)$.\\
We adjust Cheung's and Masur's definition of good partners to our needs.

\begin{df}
We call the triple $(v_1,v_2,v_c)$ of holonomy vectors good partners (with respect to $w$), if
\begin{align*}
4 \abs{v_1 \times v_2} & <  \frac{1}{9} \min\{\abs{v_1 \times w}, \abs{v_2 \times w}\}, \\
4 \abs{v_1 \times v_c} & <  \frac{1}{9} \min\{\abs{v_1 \times w}, \abs{v_c \times w}\} \text{ and} \\
4 \abs{v_2 \times v_c} & <  \frac{1}{9} \min\{\abs{v_2 \times w}, \abs{v_c \times w}\}. \\
\end{align*}
\end{df}

We consider a pair $(X,\omega) \in \Ls$ that admits a \fix -splitting $(T_1,T_2,C,w)$. The following lemma answers the question, under which conditions a Dehn twist leads to another \fix -splitting.

\begin{lem} \label{lem_twist_saddle}
Each of the twisted simple closed curves $\alpha_+^k$, $\alpha_-^k$, $\beta_+^k$ and $\beta_-^k$ is realized by a single saddle connection if $w$ and $w^k$ lie on the same side of $v_1$, $v_2$ and $v_c$, i.e.~if all cross products $v_j \times w$ and $v_j \times w^k$, $j \in \{ 1,2,c \}$, are positive or all are negative.
\end{lem}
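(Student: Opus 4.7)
The plan is to construct the putative saddle connection representing $\alpha_+^k$ explicitly by straightening the Dehn-twisted curve inside each of the four pieces of the \fix -decomposition, and then verify that the resulting concatenation meets singular points only at its endpoints. Since each piece is a flat torus or cylinder and hence carries no interior singularities, a straight segment living inside a piece can only hit a singularity where it exits through a boundary saddle connection $\alpha_\pm$ or $\beta_\pm$, and only if that exit point coincides with a corner of the slit. Thus the entire claim reduces to the statement that, in each piece, the straightened segment exits at a non-corner boundary point.

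First I would describe the piecewise straightenings. Inside $T_1$ the twisted arc straightens to a straight segment of holonomy $w + kv_1$ joining a lift of the initial slit corner to a lift of the terminal one; analogously inside $T_2$ with $v_2$ in place of $v_1$; and inside each cylinder $C_i$ the corresponding arc becomes a straight segment whose holonomy combines the relevant piece of $w$ with $kv_c$. Each such segment exists as a genuine straight geodesic in the universal cover of its piece, because the piece is flat with singularities only on the boundary, and its existence does not yet depend on the hypothesis.

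Next I would translate the hypothesis $\mathrm{sign}(v_j \times w) = \mathrm{sign}(v_j \times w^k)$ into the assertion that every piecewise segment exits its piece at an interior (non-corner) point of the correct boundary saddle connection. The cross product $v_j \times w$ records on which side of the closed geodesic of holonomy $v_j$ the slit $w$ sits; similarly $v_j \times w^k$ records this for the twisted arc. Equal signs say the direction of $w^k$ has not swept across the line spanned by $v_j$ as we twisted, which via a parallelogram-area comparison in the universal cover of each piece forces the straightened segment to terminate on the correct boundary saddle connection, and to meet it strictly in its interior. Concatenating the four straightenings then yields a single straight geodesic in $X$ from $P_1$ to $P_2$ of total holonomy $w^k$ meeting singularities only at its endpoints, i.e.\ a single saddle connection representing $\alpha_+^k$. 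The three remaining cases $\alpha_-^k,\beta_+^k,\beta_-^k$ follow by the same argument together with the symmetry of the \fix -splitting.

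The hard part will be making the third step precise: keeping track, in the universal cover of each individual piece, of which lifts of $P_1$ and $P_2$ serve as the endpoints of the straightened segment, and checking that the sign condition is exactly what is needed to exclude the segment from hitting any other lift of a singularity. The combinatorial bookkeeping to match the global holonomy $w^k = w + k(v_1 + v_2 + 2v_c)$ with the piecewise holonomies $w + kv_j$ and the cylinder contributions is where care is required; everything else is a direct consequence of the absence of interior singularities in tori and cylinders.
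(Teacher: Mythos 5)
Your plan breaks down at the step where you declare that the four per-piece straightenings concatenate to ``a single straight geodesic in $X$''. The segments you produce are straight only inside their own piece and, by your own bookkeeping, point in different directions (holonomy $w+kv_1$ in $T_1$, $w+kv_2$ in $T_2$, further cylinder contributions in $C_1$, $C_2$); at the points where the path crosses the slits $\alpha_\pm$, $\beta_\pm$ the flat metric is regular, so a concatenation with corners at those points is not a geodesic at all, let alone a single saddle connection. (The holonomies do not add up either: your four sub-segments would contribute the vector $w$ more than once, while the target is $w^k=w+k(v_1+v_2+2v_c)$.) In other words, the assertion that the corners disappear when the broken path is pulled tight \emph{without snagging on a zero} is exactly the content of the lemma, and your outline assumes it rather than proves it. The correct candidate is one straight segment in direction $w^k$ which, for $\abs{k}>1$, passes through each of the four pieces several times; whether it meets a zero in its interior is a global question about the region swept by the twist and cannot be settled inside the universal covers of the individual pieces, which is where your ``hard part'' is located.

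What is missing is the quantitative use of the hypotheses. The paper's proof normalizes by the $\SL$-action so that $w$ is vertical and (for $k<0$) $v_1$ is horizontal, and develops the region around the twisting curve into a strip whose singular points lie only on the lower boundary (the concatenation of $v_1$, $v_c$, $v_2$, $v_c$) and on the upper boundary at height $\vrt(w)$. The sign conditions $(j)$ keep the segment $w^k$ above the lower boundary, and expanding $0<v_1\times w^k$ gives $\vrt(w)>-k\,\vrt(v_2+2v_c)$, which prevents it from reaching the upper boundary; hence no zero is met except at the endpoints. For $k>0$ one instead makes $v_2+2v_c$ horizontal and uses inequalities $(2)$ and $(c)$ to get $\vrt(w)>-k\,\vrt(v_1)$. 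Your ``parallelogram-area comparison'' gestures at this but is never carried out, and this estimate is the whole point. Note also that the argument must first choose between $\alpha_+$ and $\alpha_-$ according to the sign of $v_1\times(v_2+2v_c)$, the other curve being handled via the hyperelliptic involution, and must treat $k<0$ and $k>0$ separately; neither case split appears in your proposal.
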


This lemma corresponds to Lemma 3.1 in \cite{CheMas06a}.

\begin{proof}
Suppose $v_j \times w > 0$ for $j \in \{ 1,2,c \}$. The inequality $v_j \times w^k > 0$ will be referred to as ($j$), $j \in \{ 1,2,c \}$. If $v_1 \times (v_2 + 2v_c) >0$ we consider $\alpha_+$, otherwise $\alpha_-$ (thanks to the hyperelliptic involution, the other $\alpha_\pm$ will be realized by a single saddle connection, too). So assume $v_1 \times (v_2 + 2v_c) >0$.
\begin{figure}
	\input{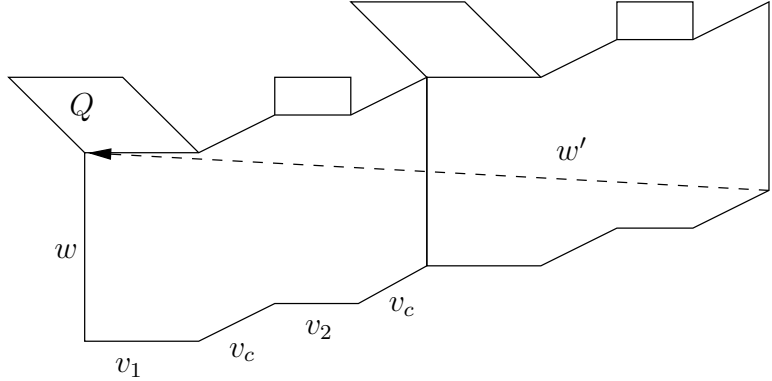}
	\caption{A splitting vector $w$ and its twist $w'$. The horizontal cylinder $Q$ is not affected.}
	\label{fig:twist}
\end{figure}
\newline
First, let $k <0$. As the $\SL$-action preserves cross products, we may assume that $w$ is vertical and $v_1$ is horizontal. The inequalities ($1$), ($2$) and ($c$) tell us that, as in Figure \ref{fig:twist}, the vector $w^k$ is above the lower boundary, hence the only possibility to hit (the image of) a singularity is by reaching (or by crossing) the upper boundary. Moreover, if $w^k$ crosses the upper boundary once, it will not come back from above. Let $\vrt(x)$ denote the vertical part of a vector $x$. We examine equation ($1$) more closely:
\begin{align*}
0 < v_1 \times w^k & = (v_1 \times w) + k (v_1 \times v_2) + 2k (v_1 \times v_c) \\
& = \frac{1}{\abs{v_1}} \vrt(w) + \frac{k}{\abs{v_1}} (\vrt(v_2) + 2\vrt(v_c)),
\end{align*}
therefore
\[ \vrt(w) > -k \vrt(v_2 + 2v_c). \]
This implies that $w^k$ does not cross the upper boundary and therefore does not hit a singularity beside at its endpoints. Hence, $\alpha_+$ is realized by a single saddle connection and so is $\alpha_-$.
\newline
Now let $k > 0$. After applying the $\SL$-action we may assume $w$ to be vertical and $v_2 + 2v_c$ to be horizontal. This causes $\vrt(v_1)$ to be negative since $v_1 \times (v_2 + 2v_c) >0$. Again, the inequalities ($1$), ($2$) and ($c$) tell us that $w^k$ is above the lower boundary, and, if it crosses the upper one, it will stay above. The inequalities ($2$) and ($c$) lead to 
\begin{align*}
0 < (v_2 + 2v_c) \times w^k & = ((v_2 + 2v_c) \times w) + k ((v_2 + 2v_c) \times v_1)\\
& = \frac{1}{\abs{v_2 + 2v_c}} \vrt(w) + \frac{k}{\abs{v_2 + 2v_c}} \vrt(v_1),
\end{align*}
thus 
\[ \vrt(w) > -k \vrt(v_1). \]
As above, $\alpha_+$ is realized by a single saddle connection and so is $\alpha_-$.
\newline
In a similar manner we conclude for $v_j \times w < 0$ and $v_j \times w^k < 0$, $j \in \{1,2,c\}$ and for $v_1 \times (v_2 + 2v_c) < 0$.
\newline
The same reasoning applies to $\beta_+$ and $\beta_-$ instead of $\alpha_+$ and $\alpha_-$.
\end{proof}

\begin{rem}
\begin{itemize}
\item The new splitting is a \fix -splitting again: the two cylinders in the new splitting are isometric as flat surfaces, and the isometry may be chosen to preserve the horizontal direction.

\item An easy computation shows that good partners fulfill the conditions for this lemma with $\abs{k} \leq 9$.
\end{itemize}
\end{rem}

Similar to Cheung and Masur, we want to get new irrational splittings from old ones by applying Dehn twists.

\begin{lem} \label{lem_twist_irrat}
If $(v_1,v_2,v_c)$ are good partners in an irrational \fix -splitting $(T_1,T_2,C,w)$, then at least one of the nine twists $w^k$ (with $k \in \{ 1, \cdots, 9\}$ or $k \in \{ -1, \cdots, -9\}$) leads to an irrational \fix -splitting.
\end{lem}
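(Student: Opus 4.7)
My plan is to follow Cheung and Masur (\cite{CheMas06a}). First, I would observe that the good-partners hypothesis yields $|v_j\times u|<\tfrac{1}{12}|v_j\times w|$ for $j\in\{1,2\}$, where $u:=v_1+v_2+2v_c$, so for every $|k|\le 9$ the cross products $v_j\times w$ and $v_j\times w^k=v_j\times w+k(v_j\times u)$ have the same sign and Lemma~\ref{lem_twist_saddle} guarantees that $w^k$ is the holonomy of a \fix-splitting. What remains is to exhibit a $k$ for which this new splitting is irrational, i.e.\ the direction of $w^k$ is periodic in neither $T_1$ nor $T_2$. I call $k$ \emph{bad for $T_j$} if $w^k$ is parallel to some non-zero $\lambda\in\Lambda_j$.

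The core step will be a separation claim: for each $j\in\{1,2\}$ and each sign, at most one $k$ of that sign in $\{1,\ldots,9\}$ (respectively $\{-1,\ldots,-9\}$) is bad for $T_j$. To prove it I would suppose $k_1\ne k_2$ with the same sign are both bad at $T_j$, witnessed by primitive $\lambda_i\in\Lambda_j$ parallel to $w^{k_i}$. If $\lambda_1\parallel\lambda_2$, then $w^{k_1}\parallel w^{k_2}$, so $u=(w^{k_1}-w^{k_2})/(k_1-k_2)$ is parallel to them and hence $w=w^{k_1}-k_1 u$ is parallel to $\lambda_1\in\Lambda_j$, contradicting the irrationality of the original splitting. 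In the remaining independent case, $|\lambda_1\times\lambda_2|\ge\area(T_j)$. Writing $w^{k_i}=s_i\lambda_i$ with $|s_i|<1$ (the strict inequality enforced by the requirement that a slit of length $|w^{k_i}|$ in $T_j$ not close up along a parallel closed geodesic of length $|\lambda_i|$), I would use the identity
\[ w^{k_1}\times w^{k_2}=(k_2-k_1)(w\times u) \]
together with $v_j\times\lambda_i=\tfrac{1}{s_i}(v_j\times w^{k_i})$ and the integrality $|v_j\times\lambda_i|\in\area(T_j)\cdot\Nb$ to combine the good-partners bounds with $|w\times u|\le|v_1\times w|+|v_2\times w|+2|v_c\times w|$ and derive the required contradiction.

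Granting the separation claim for both $j=1,2$, at most two of the nine positive twists are bad (one per torus), leaving at least seven irrational \fix-splittings; the same holds for the nine negative twists, completing the proof. The main obstacle will be the numerical estimate in the independent case: one must check that the constants $\tfrac{1}{9}$ and $4$ in the definition of good partners (equivalently, the factor $\tfrac{1}{36}$) are tuned tightly enough that the inequality chain closes uniformly for all $(k_1,k_2)$ in the relevant range. This parallels Lemma~3.2 of \cite{CheMas06a}, but has to be carried out simultaneously against the three short vectors $v_1,v_2,v_c$ rather than a single one.
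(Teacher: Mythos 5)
Your opening step is fine: good partners do give $\abs{v_j\times u}<\tfrac{1}{12}\abs{v_j\times w}$ for $u=v_1+v_2+2v_c$, so all twists with $\abs{k}\le 9$ are realized by saddle connections via Lemma \ref{lem_twist_saddle}; this matches the remark in the paper. The gap is in your notion of ``bad for $T_j$''. Irrationality of the twisted splitting is a condition on the \emph{new} tori $T_1^k$, $T_2^k$, whose lattices $\Lambda_1^k$, $\Lambda_2^k$ are in general different from $\Lambda_1$, $\Lambda_2$ (the twist exchanges area between the pieces). Ruling out ``$w^k$ parallel to some $\lambda\in\Lambda_j$'' therefore does not show that the new splitting is irrational. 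The paper's Lemma \ref{lem_twist_cheung} exists precisely to bridge this: using the cylinder $Q$ in direction $v_1$ that $T_1$ and every $T_1^k$ share, rationality of $w^k$ in $\Lambda_1^k$ is shown to be equivalent to rationality in the fixed auxiliary lattice $\Lambda_0=\left<v_0,v_1\right>$. If you repair your definition accordingly, the ``independent case'' of your separation claim breaks down: the witnesses $\lambda_1,\lambda_2$ then lie in the different lattices $\Lambda_j^{k_1}$ and $\Lambda_j^{k_2}$ (or must be transferred to $\Lambda_0$), so the covolume bound $\abs{\lambda_1\times\lambda_2}\ge\area(T_j)$ and the normalization $\abs{s_i}<1$ are not available in the form you use them.

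Even granting the transfer to a single lattice, your separation claim (``at most one bad $k$ of each sign per torus'') is substantially stronger than anything the soft structure provides, and the decisive numerical estimate is exactly the part you defer. Two non-commensurable lattices can share two directions (e.g.\ $\Zb^2$ and $\Zb\oplus\alpha\Zb$, $\alpha$ irrational, share both coordinate directions), so no commensurability argument forbids two bad twists; only ``no three shared non-parallel directions'' comes for free, via McMullen's isogeny criterion. This is precisely how the paper argues: it groups the nine twists of one sign into the triples $(w^1,w^2,w^3)$, $(w^4,w^5,w^6)$, $(w^7,w^8,w^9)$, applies Lemma \ref{lem_twist_cheung} to each triple to extract one twist irrational in its $T_1^k$, and then applies the same lemma once more to the three survivors with respect to $T_2$. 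The constants $4$ and $\tfrac19$ in the definition of good partners are tuned only to make the twists with $\abs{k}\le 9$ realizable by saddle connections; nothing in the paper (or in your sketch) shows they also force the uniform inequality your plan needs, so as it stands the core of your proof is an unproved, and possibly unprovable, estimate, whereas the intended argument is soft and needs no such computation.
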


In order to prove the above lemma, we need the following

\begin{lem} \label{lem_twist_cheung}
Let $(T_1,T_2,C,w)$ be a splitting, such that the slope of $w$ is irrational in $T_1$, and let $v_1$, $v_2$ and $v_c$ be holonomy vectors such that each $\alpha_+^k$, $\alpha_-^k$, $\beta_+^k$ and $\beta_-^k$ is realized by one saddle connection and such that we have a \fix -splitting $(T_1^k,T_2^k,C^k,w^k)$ for at least three different $k = k_1$, $k_2$, $k_3$. Then at least one of the three splittings is irrational in the respective $T_1^k$.
\end{lem}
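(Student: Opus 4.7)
My plan is to compute the lattice $\Lambda_1^k$ of the twisted torus explicitly, translate the rationality condition into an arithmetic statement on $k$, and conclude by a case split.

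The geometric input is the following description of $\Lambda_1^k$. I pick a generator $v_1'$ of $\Lambda_1$ complementing $v_1$, so that $\{v_1, v_1'\}$ spans $\Lambda_1$; the loop realizing $v_1'$ has algebraic intersection $\pm 1$ with the cylinder $\mathcal{Q}$ whose core curve is the concatenation of the four loops of holonomies $v_1, v_c, v_2, v_c$, hence of total holonomy $u := v_1 + v_2 + 2v_c$. A $k$-fold Dehn twist along $\mathcal{Q}$ shifts any transverse loop by $\pm k$ copies of the core curve, so the holonomy of the $v_1'$-loop becomes $v_1' + ku$ (absorbing the sign into $k$), while the $v_1$-loop is parallel to $\mathcal{Q}$ and its holonomy is preserved. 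Hence
\[
\Lambda_1^k \;=\; \Zb v_1 + \Zb(v_1' + ku),
\]
which has the same covolume as $\Lambda_1$ since $u \parallel v_1$.

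Now write $w = \tilde w_1 v_1 + \tilde w_2 v_1'$ and $u = c\, v_1$ with $c \in \Rb \setminus \{0\}$. The hypothesis that $w$ is irrational in $T_1$, i.e.\ $w \notin \Rb \cdot \Lambda_1$, forces $\tilde w_2 \neq 0$ and $\beta := \tilde w_1/\tilde w_2 \notin \Qb$. For $\mu = (m+nkc)v_1 + n v_1' \in \Lambda_1^k$ with $(m,n) \in \Zb^2 \setminus \{0\}$, the parallelism condition $\det(w^k, \mu) = 0$ reduces to $m/n = \beta + k\alpha$, where $\alpha := c(1 - \tilde w_2)/\tilde w_2$. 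Thus $w^k$ is rational in $T_1^k$ if and only if $\beta + k\alpha \in \Qb$.

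Two cases finish the proof. If $\alpha \in \Qb$ then $\beta + k\alpha \notin \Qb$ for every $k \in \Zb$ (because $\beta \notin \Qb$), so $w^k$ is irrational in every $T_1^k$. If $\alpha \notin \Qb$, then two distinct integers $k_1 \neq k_2$ with $\beta + k_i\alpha \in \Qb$ would give $(k_1 - k_2)\alpha \in \Qb$, contradicting $\alpha \notin \Qb$; so at most one integer $k$ produces a rational $w^k$. In either case at most one of $k_1, k_2, k_3$ gives a splitting rational in $T_1^{k_i}$, and the lemma (with room to spare, since in fact at least two of three are irrational) follows. The main obstacle is pinning down $\Lambda_1^k$ in the first step: the naive guess $\Lambda_1^k = \Lambda_1$ is wrong, because the twist curve passes through all four pieces $T_1, T_2, C_1, C_2$, so a loop in $T_1$ transverse to it picks up the full core holonomy $u$ per twist rather than just $v_1$. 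Once this is established the rest is elementary.
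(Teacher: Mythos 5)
Your computation of the twisted lattice is correct and matches the paper: with $v_1' = v_0 + w$ in the paper's notation (where $v_0$ is the holonomy of a crossing segment of the cylinder $Q \subset T_1 \cap T_1^k$), one indeed has $\Lambda_1^k = \Zb v_1 + \Zb(v_1' + ku)$, and recognizing that the naive guess $\Lambda_1^k = \Lambda_1$ is wrong is exactly the right point to worry about. The genuine gap comes immediately afterwards: you assert $u \parallel v_1$, i.e.\ $u = c\,v_1$, and everything from that point on depends on it. This is false in general: $u = v_1 + v_2 + 2v_c$, and there is no reason for $v_1 \times (v_2 + 2v_c)$ to vanish -- the paper's Lemma \ref{lem_twist_saddle} explicitly argues with $v_1 \times (v_2+2v_c) \neq 0$, and the good-partner conditions only make the cross products $\abs{v_1 \times v_2}$, $\abs{v_1 \times v_c}$ small, not zero. (Your side remark that $\Lambda_1^k$ has the same covolume as $\Lambda_1$ is false for the same reason -- area is exchanged between the pieces, cf.\ Proposition \ref{lem_twist_area} -- though that remark is not used later.) The parallelism assumption is load-bearing: it is what makes your rationality condition \emph{affine} in $k$, namely $m/n = \beta + k\alpha$, and hence yields the conclusion that at most one $k$ is bad, so that two values of $k$ would already suffice. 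That conclusion is too strong and should itself have been a warning sign, since the lemma is carefully stated with \emph{three} values of $k$.

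If you redo your own computation with $u = c_1 v_1 + c_2 v_1'$, $c_2$ not assumed zero, then $w^k$ is rational in $\Lambda_1^k$ exactly when the fractional-linear expression $\bigl(\tilde w_1 + k(c_1 + c_2\tilde w_1 - c_1\tilde w_2)\bigr)/\bigl(\tilde w_2 + k c_2\bigr)$ lies in $\Qb$ (or $w^k \parallel v_1$). A M\"obius function not defined over $\Qb$ can take rational values at two integers, so your case split collapses; but if it takes rational values at \emph{three} distinct integers it is determined over $\Qb$ up to scale, hence its value at $k=0$, which is $\tilde w_1/\tilde w_2$ (or $\infty$, meaning $w \parallel v_1$), would be rational, contradicting the irrationality of $w$ in $T_1$. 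Repairing your argument this way proves the lemma (at least one of any three twists is irrational in $T_1^k$) and is in substance the paper's proof in coordinates: the paper transfers rationality of $w$ and of all $w^k$ to the fixed auxiliary lattice $\Lambda_0 = \left<v_0, v_1\right>$ and then uses that two lattices sharing three pairwise non-parallel directions are isogenous and hence share all directions (following McMullen), which plays exactly the role of the three-point rigidity of the M\"obius map above.
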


\begin{rem}
The lemma is symmetric with respect to $T_1$ and $T_2$ in the sense that one of the three splittings is irrational in $T_2^k$, too. Of course, this $k$ may be different from the $k$ that we got for $T_1$.
\end{rem}

This lemma is very much the same as Lemma 4.2 in Cheung's and Masur's article \cite{CheMas06a}, we only have to adjust their proof slightly.

\begin{proof}
Let $Q$ be a maximal cylinder in the direction of $v_1$ in $T_1$ that is disjoint from $w$, see Figure \ref{fig:twist}. Let $\gamma_0$ be a simple segment in $Q$ that concatenates with $\alpha_+$ to form a simple closed curve in $T_1$. Let $v_0 = \hol(\gamma_0)$. Then $\Lambda_1$, the lattice of $T_1$, is generated by $v_0 + w$ and $v_1$.\\
We claim: The vector $w$ is a scalar multiple of an element in $\Lambda_1$ (\textit{rational} for short), if and only if $w$ is rational in $\Lambda_0 = \left<v_0,v_1\right>$.\\
Indeed: For $a$,$b \in \Zb$ and $c \in \Rb_{\geq 0}$, $w = c(av_0 + bv_1)$  is equivalent to $(1+ac)w = c(a(v_0+w)+bv_1)$ with $(1+ac) \neq 0$ since $v_0+w$ and $v_1$ are linearly independent. This proves the claim.
\newline
Now, $T_1$ and $T_1^k$ share the same cylinder $Q$. Therefore, $w^k$ is rational in $\Lambda_1^k$, the lattice of $L_1^k$, if and only if $w^k$ is rational in $\Lambda_0$.\\
To prove the lemma, suppose that $(T_1^k,T_2^k,C^k,w^k)$ were rational in $T_1^k$ for all $k \in \{k_1,k_2,k_3\}$. Then the three $w^k$ are parallel to elements in $\Lambda_0$. Let $v^*=v_2+2v_c$ and $\Lambda_0^* = \left<w, v_1 + v^*\right>$. As $\Lambda_0$ and $\Lambda_0^*$ share the three directions $w^k$ which are not parallel to each other, $\Lambda_0$ and $\Lambda_0^*$ are isogenous (c.f. \cite{McM05a}, proof of Theorem 7.3) and they share all possible directions. Thus $w$ is parallel to an element in $\Lambda_0$ and $(T_1,T_2,C,w)$ is rational in $T_1$, a contradiction.
\end{proof}

Suppose that the conditions of Lemma \ref{lem_twist_irrat} are satisfied. The above lemma tells us that each of the three triples $(w^1,w^2,w^3)$, $(w^4,w^5,w^6)$ and $(w^7,w^8,w^9)$ contains one splitting that is irrational in the respective torus $T_1^k$. Call these vectors $\tilde{w}^1$, $\tilde{w}^2$ and $\tilde{w}^3$. Applying Lemma \ref{lem_twist_cheung} in $(T_1,T_2,C,w)$ again, this time with respect to the triple $(\tilde{w}^1,\tilde{w}^2,\tilde{w}^3)$ and to the torus $T_2$, we get at least one splitting, that is irrational in $T_2^k$, too. This proves Lemma \ref{lem_twist_irrat}.
\newline
The following proposition gives us some information about the area exchanged between two splittings when we apply such a twist (c.~f.~\cite[Lemma 3.3]{CheMas06a}).

\begin{prop} \label{lem_twist_area}
Let $(T_1,T_2,C,w)$ be a splitting of $(X,\omega)$ and let $(T_1',T_2',C',w')$ be obtained by twisting $k$ times. The change of area between the two tori indexed by $1$ is estimated by $\area(T_1 \Delta T_1') \leq  2\abs{v_1 \times w} + \abs{k}(\abs{v_1 \times v_2} + 2\abs{v_1 \times v_c})$. For $\area(T_2 \Delta T_2')$ the indices $1$ and $2$ change positions.
\end{prop}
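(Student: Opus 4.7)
The plan is to identify $T_1 \Delta T_1'$ as a union of geometric pieces and bound each piece via a cross product with $v_1$. Write $v := v_1 + v_2 + 2v_c$ for the holonomy of the concatenated simple closed curve $\gamma$ along which the Dehn twist is performed, so that $w^k = w + kv$. By Lemma \ref{lem_twist_saddle} the twisted saddle connections $\alpha_\pm^k$ bounding $T_1'$ start and end at the same singularities as $\alpha_\pm$, but wind $|k|$ additional times around $\gamma$ before closing up. Applying the $\SL$-action (which preserves cross products and areas) I may assume that $v_1$ is horizontal; then $\abs{v_1 \times u} = \abs{v_1}\cdot\abs{\vrt(u)}$ for any vector $u$, so each cross product with $v_1$ equals the area of a horizontal strip of length $\abs{v_1}$ and vertical height $\abs{\vrt(u)}$.

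Both $T_1$ and $T_1'$ contain the maximal horizontal cylinder $Q \subset T_1$ disjoint from $w$ that was introduced in the proof of Lemma \ref{lem_twist_cheung}, since, as noted there, the Dehn twist does not touch $Q$ (cf.\ Figure \ref{fig:twist}). Outside $Q$, $T_1$ is bounded by $\alpha_\pm$ and $T_1'$ by $\alpha_\pm^k$, so $T_1 \Delta T_1'$ is covered by the following pieces: (i) two sliver regions in $T_1$ adjacent to $\alpha_+$ and $\alpha_-$, each contained in a horizontal strip of length $\leq \abs{v_1}$ and vertical height $\leq \abs{\vrt(w)}$ and therefore of area at most $\abs{v_1 \times w}$; and (ii) for each of the $\abs{k}$ windings of $\alpha_\pm^k$ around $\gamma$, a strip inside $T_2$ of area at most $\abs{v_1 \times v_2}$ together with a strip inside each of $C_1$ and $C_2$ of area at most $\abs{v_1 \times v_c}$.

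Summing over the two boundary slivers in (i) and the $\abs{k}$ windings in (ii) produces
\[
\area(T_1 \Delta T_1') \leq 2\abs{v_1 \times w} + \abs{k}\bigl(\abs{v_1 \times v_2} + 2\abs{v_1 \times v_c}\bigr),
\]
which is the claimed estimate; the corresponding bound for $\area(T_2 \Delta T_2')$ follows by interchanging the roles of $T_1, v_1$ and $T_2, v_2$. The main obstacle is organizing the geometric decomposition carefully so as to be sure that the listed slivers and strips really do cover $T_1 \Delta T_1'$; however, since we only need an upper bound, any overlaps between the pieces are harmless, and the essential input is the $\SL$-equivariance of the area form together with the fact that $\gamma$ traverses each of the four sub-pieces $T_1, T_2, C_1, C_2$ exactly once per winding.
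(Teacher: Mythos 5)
Your skeleton is the right one and is the same as the paper's: the cylinder $Q$ from Lemma \ref{lem_twist_cheung} is untouched by the twist, so $Q\subset T_1\cap T_1'$ and everything is to be bounded by cross products with $v_1$, using $w'=w+k(v_1+v_2+2v_c)$. But the step that actually carries the proof is asserted rather than proved. You claim that $T_1\Delta T_1'$ is covered by two slivers of area at most $\abs{v_1\times w}$ each, plus, per winding, a strip in $T_2$ of area at most $\abs{v_1\times v_2}$ and a strip in each of $C_1$, $C_2$ of area at most $\abs{v_1\times v_c}$. Neither the coverage nor the per-piece area bounds are established; you flag this yourself as ``the main obstacle'' and then only dismiss overlaps, which were never the problem (overlaps are harmless for an upper bound, but an uncovered part of $T_1\Delta T_1'$, or a strip whose area bound is wrong, is fatal). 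In particular it is not explained why a strip of $T_1'$ crossing the old $T_2$ should have its width governed by $v_1$, nor why the part of $T_1'$ outside $T_1$ carries no additional contribution of size $\abs{v_1\times w}$; both of these need the fact that $T_1'\setminus Q$ is a cylinder of circumference $\abs{v_1}$ in the direction of $v_1$ crossed by the new slit of holonomy $w'$, which you never state.

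The gap closes at once if you replace the sliver/strip bookkeeping by the coarser inclusion $T_1\Delta T_1'\subset (T_1\setminus Q)\cup(T_1'\setminus Q)$, valid precisely because $Q\subset T_1\cap T_1'$. Both complements are cylinders in direction $v_1$ crossed by the respective slit, so $\area(T_1\setminus Q)=\abs{v_1\times w}$ and $\area(T_1'\setminus Q)=\abs{v_1\times w'}$, and then
\[
\abs{v_1\times w'}=\abs{v_1\times\bigl(w+k(v_1+v_2+2v_c)\bigr)}\leq \abs{v_1\times w}+\abs{k}\bigl(\abs{v_1\times v_2}+2\abs{v_1\times v_c}\bigr),
\]
since $v_1\times v_1=0$. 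Adding the two areas gives exactly the claimed estimate, and the bound for $\area(T_2\Delta T_2')$ follows by the symmetry of the construction in $T_1$ and $T_2$. This is the paper's argument; your per-winding picture is a geometric paraphrase of the triangle inequality above, but as written it substitutes the conclusion for the proof.
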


\begin{proof}
Let $Q$ be as in the proof of Lemma \ref{lem_twist_cheung}. We have $Q \subset T_1 \cap T_1'$, hence $T_1 \Delta T_1' \subset (T_1 \setminus Q) \cup (T_1' \setminus Q)$. The areas on the right hand side are easily computed: $\area(T_1 \setminus Q) = \abs{v_1 \times w}$ and $\area(T_1' \setminus Q) = \abs{v_1 \times w'}$. Using $w' = w + k(v_1 + v_2 + 2v_c)$ the first statement follows.\\
For the second statement, we remark that the construction is symmetric in $T_1$ and $T_2$.
\end{proof}


\section{The inductive process} \label{section_induction}

We are going to apply an inductive process to find uncountably many nonergodic directions. In order to handle this inductive process we need an algebraic lemma that gives information about some orbit closures.
\newline
The following notations will be helpful: Let $G = \SL$ and let $N$ be the unipotent subgroup of upper triangular matrices. We will look at the action of $N$ on triples of unimodular lattices. This action is given by the diagonal group of $N$ which we denoted by $N_\Delta$. For $s \in \Rb$, let $G_s = \{(g, (n_s)^{-1} g n_s) \,:\, g \in G\}$ be the twisted diagonal of $G$, where $n_s = \genfrac{(}{)}{0pt}{}{1 \, s}{0 \, 1}$. Two lattice $\Lambda_1$ and $\Lambda_2$ are said to be strongly non-commensurable if there is no $s \in \Rb$ such that $\Lambda_1$ and $n_s \Lambda_2$ are commensurable.

\begin{prop} \label{induction_n-action}
Let $\Lambda_c$ be the standard lattice with area 1 and let $\Lambda_1$ and $\Lambda_2$ be two unimodular lattices, neither of them containing a horizontal vector. If $\Lambda_1$ and $\Lambda_2$ are strongly non-commensurable then $\overline{N_\Delta(\Lambda_1,\Lambda_2,\Lambda_c)} = (G \times G \times N)(\Lambda_1,\Lambda_2,\Lambda_c)$. Otherwise $\overline{N_\Delta(\Lambda_1,\Lambda_2,\Lambda_c)} = (G_s \times N)(\Lambda_1,\Lambda_2,\Lambda_c)$, where $s \in \Rb$ is such that $\Lambda_1$ and $n_s \Lambda_2$ are commensurable.
\end{prop}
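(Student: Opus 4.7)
The plan is to apply Ratner's theorem on unipotent orbit closures (alternatively the Mozes--Shah theorem) to the diagonal action of $N$ on $X^3$, where $X = \text{SL}(2,\Zb) \backslash G$ is identified with the space of unimodular lattices. This yields a closed subgroup $F \subset G^3$ containing $N_\Delta$ with
\[\overline{N_\Delta \cdot (\Lambda_1, \Lambda_2, \Lambda_c)} = F \cdot (\Lambda_1, \Lambda_2, \Lambda_c),\]
and the task reduces to identifying $F$ from the hypotheses.

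I would first pin down the third factor. Since $\Lambda_c = \Zb^2$, the orbit $N \cdot \Lambda_c$ is a closed horocycle in $X$. The projection $\pi_3(F) \subset G$ is a closed subgroup containing $N$ whose orbit on $\Lambda_c$ must remain inside this closed horocycle. The only such closed subgroup is $N$ itself, since both the Borel $B$ and the whole group $G$ would move $\Lambda_c$ off the horocycle via the diagonal torus $A$. Hence $F \subset G \times G \times N$ and $\pi_3(F) = N$.

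Next I would apply the joinings classification for horocycle flows on $X \times X$---a standard consequence of Ratner's theorem used in the proof of Theorem 7.3 of \cite{McM05a}---to the projection $\pi_{12}(F)$. Since neither $\Lambda_1$ nor $\Lambda_2$ contains a horizontal vector, each has a dense $N$-orbit in $X$, and the orbit closure of $(\Lambda_1,\Lambda_2)$ under the diagonal $\Delta N = \{(n,n) : n \in N\} \subset G^2$ is either all of $X \times X$ or a closed $G_s$-orbit. The $G_s$-orbit through $(\Lambda_1,\Lambda_2)$ is closed exactly when its stabilizer is a lattice in $G_s$, which is equivalent to $\Lambda_1$ being commensurable with $n_s \Lambda_2$---the alternative stated in the proposition. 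Hence $\pi_{12}(F) = G \times G$ or $\pi_{12}(F) = G_s$.

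Finally, I would reassemble $F$ from its projections. Set $F_{12} = F \cap (G \times G \times \{e\})$; since $N_\Delta$ projects isomorphically onto $N = \pi_3(F)$, the extension $1 \to F_{12} \to F \to N \to 1$ splits via $N_\Delta$, so $F = F_{12} \cdot N_\Delta$ with $F_{12} \cdot \Delta N = \pi_{12}(F)$. Normality of $F_{12}$ in $F$ forces $F_{12}$ to be normalized by $\Delta N$; but the only proper closed $N$-normalized subgroups of $G$ are contained in the Borel $B \supset N$, so after translating to $G \times G$ or $G_s$, a proper $\Delta N$-normalized $F_{12}$ would satisfy $F_{12} \cdot \Delta N = F_{12} \subsetneq \pi_{12}(F)$, a contradiction. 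Thus $F_{12} = \pi_{12}(F)$ and $F = G \times G \times N$ or $F = G_s \times N$, as claimed. The main obstacle is this final reassembly: the joinings classification is classical, but one must verify that the closed horocycle in the third factor contributes an independent $N$-factor to $F$ rather than being tied to the first two coordinates by a non-split extension, which the subgroup/dimension analysis sketched above resolves.
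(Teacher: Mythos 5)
Your proposal follows the same Ratner-based skeleton as the paper (orbit closure $=F$-orbit, identify projections of $F$, reassemble), but the reassembly is genuinely different. The paper's proof of the commensurable case takes as input two projection identities, $\pi_{1,2}(H)=G_s$ and, crucially, $\pi_{1,3}(H)=G\times N$ (quoting Theorem 2.6 of \cite{McM07a}), and then gets $H=G_s\times N$ by a two-line lifting argument: given $(g,(n_s)^{-1}gn_s,n)$, surjectivity of $\pi_{1,3}$ yields some $(g,g^*,n)\in H$, and $\pi_{1,2}(H)=G_s$ forces $g^*=(n_s)^{-1}gn_s$. You use only the weaker information $\pi_3(F)=N$ (your closed-horocycle argument for this is fine, since the connected subgroups of $G$ containing $N$ are $N$, $B$, $G$ and the latter two move $\Lambda_c$ off the compact horocycle) together with $\pi_{12}(F)$, and then reconstruct $F$ from the split extension $F=F_{12}\cdot N_\Delta$. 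For the strongly non-commensurable case the paper simply cites Corollary 5.3 of \cite{HubLanMol06a}, so there your argument is a self-contained substitute. Note also that deducing $\pi_{12}(F)=G_s$ (resp. $G\times G$) from the pair dichotomy deserves a word: a priori the projected orbit is only dense in the $G_s$-orbit; one can use that the third factor ranges in the compact horocycle $N\Lambda_c$, so $\pi_{12}$ is proper on the orbit closure, the projected orbit is closed, and transitivity plus a dimension count gives the claim. The paper sidesteps this by quoting McMullen.

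The one place where your write-up does not close as stated is the final classification step. The fact you invoke --- that every proper closed $N$-normalized subgroup of $G$ lies in the Borel $B$ --- is a statement about $G$, and it does settle the commensurable case after identifying $G_s\cong G$: the $\Delta N$-normalized connected subgroups of $G_s$ are $\{e\}$, $\Delta N$, the Borel of $G_s$, and $G_s$, and for each proper one $F_{12}\cdot\Delta N$ stays inside the Borel, contradicting $F_{12}\cdot\Delta N=G_s$. But in the strongly non-commensurable case $F_{12}$ lives in $G\times G$, where there are proper $\Delta N$-normalized connected subgroups not contained in any Borel, e.g. $B\times G$; moreover your displayed identity $F_{12}\cdot\Delta N=F_{12}$ presupposes $\Delta N\subset F_{12}$, which you have not established. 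The conclusion is still correct and the fix is short: from $F_{12}\cdot\Delta N=\pi_{12}(F)=G\times G$ one gets $\dim F_{12}\geq 5$, the only $5$-dimensional connected subgroups of $G\times G$ are $B\times G$ and $G\times B$ (a Goursat/subalgebra check), and for these $F_{12}\cdot\Delta N$ equals $B\times G$ resp. $G\times B$, not $G\times G$; hence $F_{12}=G\times G$ and $F=G\times G\times N$. With that repair your route works; the paper's use of $\pi_{1,3}(H)=G\times N$ avoids the subgroup classification altogether and is the cleaner of the two.
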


\begin{proof}
The first case is Corollary 5.3 in \cite{HubLanMol06a}.
\newline
The second case can be proved as follows: Ratner's theorem tells us that we can write $\overline{N_\Delta(\Lambda_1,\Lambda_2,\Lambda_c)} = H(\Lambda_1,\Lambda_2,\Lambda_c)$ for some $H < G \times G \times G$. We note that $\pi_{1,2}(H) = G_s$ under the projection $\pi_{1,2}$ to the two first factors and $\pi_{1,3}(H) = G \times N$ under the projection $\pi_{1,3}$ to the first and third factor (\cite{McM07a}, Theorem 2.6), therefore $H < G_s \times N$. We need to show equality. Let $(g, (n_s)^{-1} g n_s,n) \in G_s \times N$. As the image of $H$ under $\pi_{1,3}$ equals $G \times N$ and as $(g,n) \in G \times N$, we know that $(g,g^*,n) \in H$ for some $g^* \in G$. The first projection gives $\pi_{1,2}((g,g^*,n)) \in G_s$, hence $g^*=(n_s)^{-1} g n_s$ and therefore $(g, (n_s)^{-1} g n_s,n) \in H$.
\end{proof}

We can handle the inductive process using this proposition. Again, we make use of Cheung's and Masur's ideas. Namely we will adopt their ideas of \cite[Proposition 4.6]{CheMas06a}.

\begin{prop} \label{inductive_new_splitting}
Given an irrational \fix -splitting $(T_1,T_2,C,w)$ and $\varepsilon > 0$, there exist two new irrational \fix -splittings with small change of direction $\angle(w,w') < \varepsilon$ and such that the area exchanged is smaller than $\varepsilon$.
\end{prop}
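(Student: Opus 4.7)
The plan is to adapt Cheung and Masur's argument from the proof of Proposition~4.6 of \cite{CheMas06a} to the \fix-setting. The goal is to produce good partners $(v_1,v_2,v_c)$ in the given irrational splitting using the orbit-closure description of Proposition \ref{induction_n-action}, and then to apply the twist results of Section \ref{section_twist} to obtain the two new splittings.

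First I would normalize by an element of $\SL$ so that $w$ is vertical and $\Lambda_c$ is the standard lattice $\Zb^2$. The irrationality of the splitting means $\Lambda_1,\Lambda_2$ contain no vertical vector; a small unipotent adjustment that preserves the vertical direction of $w$ further ensures they contain no horizontal vector either, putting the data into the hypotheses of Proposition \ref{induction_n-action}. That proposition says $\overline{N_\Delta(\Lambda_1,\Lambda_2,\Lambda_c)}$ equals either $(G\times G\times N)(\Lambda_1,\Lambda_2,\Lambda_c)$ or $(G_s\times N)(\Lambda_1,\Lambda_2,\Lambda_c)$. In either case the closure contains triples in which $\Lambda_1^*,\Lambda_2^*$ each have a prescribed short primitive vector close to the horizontal direction. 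By density of the orbit in the closure, I obtain $n_t\in N$ such that $n_t(\Lambda_1,\Lambda_2,\Lambda_c)$ lies in any chosen neighborhood of such a target. Because $n_t$ preserves horizontal vectors, the lattice points of $\Lambda_i$ corresponding under this approximation to the target short vectors are close to the horizontal direction in the original coordinates, and together with a suitably chosen primitive $v_c\in\Lambda_c$ they form a triple $(v_1,v_2,v_c)$ of good partners for $w$ with consistent orientation.

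Given good partners, the Remark after Lemma \ref{lem_twist_saddle} ensures that every twist $w^k=w+k(v_1+v_2+2v_c)$ with $|k|\le 9$ is realized by single saddle connections. Lemma \ref{lem_twist_irrat} then guarantees that at least one $k_+\in\{1,\dots,9\}$ and at least one $k_-\in\{-9,\dots,-1\}$ produce irrational \fix-splittings; these are the two new splittings promised by the proposition. Proposition \ref{lem_twist_area} bounds the exchanged area by $2|v_i\times w|+|k|(|v_i\times v_j|+2|v_i\times v_c|)$, which can be made less than $\varepsilon$ by choosing the approximation of Proposition \ref{induction_n-action} fine enough. Compatibility of orientations gives $|w\times w^k|=|k|(|v_1\times w|+|v_2\times w|+2|v_c\times w|)$, so $\sin\angle(w,w^k)\leq |k|(|v_1\times w|+|v_2\times w|+2|v_c\times w|)/(|w|\cdot|w^k|)$; by choosing the target so that the cross products $|v_i\times w|$ are very small while $|w^k|$ is large, this ratio can likewise be made smaller than $\varepsilon$.

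The main obstacle is the simultaneous control of these three conditions---the good-partner inequalities, a small area exchange, and a small angle change---which is the delicate quantitative step at the heart of \cite[Proposition~4.6]{CheMas06a} and Lemma~3.3 there. The flexibility provided by Proposition \ref{induction_n-action} to approximate the $N_\Delta$-orbit by sufficiently generic triples is exactly what allows all three estimates to be achieved at once, and the additional symmetry between $T_1$ and $T_2$ in the \fix-setup makes the extension from the L-shaped case essentially formal once the correct target lattices have been identified.
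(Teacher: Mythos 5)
Your overall strategy matches the paper's (use Proposition \ref{induction_n-action} to manufacture good partners, then Lemmas \ref{lem_twist_saddle}--\ref{lem_twist_cheung} and Proposition \ref{lem_twist_area}), but there are two genuine gaps. The first is your normalization: you put $w$ \emph{vertical}, while the whole mechanism requires $w$ to be aligned with the direction fixed by $N$, i.e.\ horizontal. With $w$ horizontal, irrationality of the splitting is literally the hypothesis ``no horizontal vector in $\Lambda_1,\Lambda_2$'' of Proposition \ref{induction_n-action} (no auxiliary ``small unipotent adjustment'' is needed, and the one you propose would have to be lower triangular, hence is not available inside $N$), and, crucially, the elements $n_t\in N_\Delta$ used to approximate a point of the orbit closure fix $w$ and preserve cross products, so the inequalities established for the target vectors $v_i^*$ transfer verbatim to honest lattice vectors $v_i\in\Lambda_i$ with $v_i\times w\approx v_i^*\times w$. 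In your vertical normalization this transfer breaks down: if $n_t(\Lambda_1,\Lambda_2,\Lambda_c)$ approximates the target, then $v_i\approx n_t^{-1}v_i^*$ and $|v_i\times w|\approx|v_i^*\times n_tw|$, and since $n_t$ moves a vertical $w$ by an amount proportional to $t$ (which is not bounded in advance), the smallness of $|v_i^*\times w|$ gives no control on $|v_i\times w|$; your justification ``because $n_t$ preserves horizontal vectors, the corresponding lattice points are close to horizontal in the original coordinates'' is exactly the step that fails, because the horizontal component of $n_t^{-1}(x,y)$ is $x-ty$.

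The second gap is the commensurable case. You treat both alternatives of Proposition \ref{induction_n-action} uniformly (``in either case the closure contains triples in which $\Lambda_1^*,\Lambda_2^*$ each have a prescribed short primitive vector close to the horizontal direction''), but when the orbit closure is only $(G_s\times N)(\Lambda_1,\Lambda_2,\Lambda_c)$ the two $G$-factors are locked together: one can only arrange that $v_1^*$, $n_sv_2^*$ and $v_c^*$ are parallel, so $v_2^*$ itself is \emph{not} in the prescribed direction, and it is not automatic that the good-partner inequalities (in particular the comparison of $|v_2^*\times v_c^*|$ with $|v_2^*\times w|$ and $|v_c^*\times w|$) can still be met. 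This is precisely where the paper does quantitative work: an explicit estimate, in coordinates $(x,y)$ of $n_sv_2^*$, showing that $\frac{|v_2^*\times v_c^*|}{|v_2^*\times w|}<\varepsilon'$ once $n_sv_2^*$ is sufficiently close to horizontal, after which the remaining inequalities are arranged by shortening $v_1^*,v_2^*$ along the $G_s\times N$-orbit. Deferring this to ``the extension is essentially formal'' skips the substantive part of the argument; as written, your proof only covers the strongly non-commensurable case, and even that only after the normalization is corrected as above.
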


\begin{proof}
For convenience, suppose $w$ to be horizontal. Let $\Lambda_1$, $\Lambda_2$, $\Lambda_c$ denote the lattices of $T_1$, $T_2$, $C$. Let $\varepsilon' > 0$ be small. We will consider two cases.\\
First, let $\Lambda_1$ and $\Lambda_2$ be strongly non-commensurable. In this case $\overline{N_\Delta(\Lambda_1,\Lambda_2,\Lambda_c)} = (G \times G \times N)(\Lambda_1,\Lambda_2,\Lambda_c)$. Choose $v_c^* \in \Lambda_c^* \in N \cdot \Lambda_c$ with $\abs{v_c^* \times w} = \area(C)$ and choose $(v_1^*, v_2^*) \in (\Lambda_1^*, \Lambda_2^*) \in (G\times G)(\Lambda_1, \Lambda_2)$ with $\abs{v_i^*} < \min( \frac{\varepsilon'}{\abs{w}}, \frac{\area(T_i)}{\abs{w}}, \frac{\varepsilon'}{\abs{v_c^*}})$ for $i \in \{1,2\}$. Recall $\abs{x \times y} = \abs{\sin(\angle(x,y))} \cdot \abs{x}\abs{y}$, thus for $i \in \{1,2\}$ we have
\begin{itemize}
\item $\abs{v_i^* \times w} < \varepsilon'$
\item $\abs{v_i^* \times v_c^*} < \varepsilon'$
\item $\abs{v_1^* \times v_2^*} < \bigl( \frac{1}{\abs{w}} \bigr)^2 \bigl(\varepsilon'\bigr)^2 < \varepsilon'$
\end{itemize}
if only $\varepsilon'$ is small enough. We will refer to these inequalities as Properties (P). \\
We use the $(G \times G \times N)$-action to make $\angle(v_c^*,w)$ close to $\pi/2$. This will cause $\abs{v_c^*}$ to be close to $\frac{\area(C)}{\abs{w}}$. If necessary shorten the $v_i^*$ and in any case make $\angle(v_i^*,w)$ close to $\pi/2$, preserving the length of $v_i^*$, $i \in \{1,2\}$. Properties (P) are fulfilled again.
\newline
Look at the cross products $\abs{v_1^* \times v_2^*}$ and $\abs{v_i^* \times v_c^*}$, $i\in\{1,2\}$. The angle condition on $v_1^*$ and $v_2^*$ guarantees $\angle(v_1^*,v_2^*)$ to be small, hence $\abs{v_1^* \times v_2^*}$ is arbitrarily close to zero. On the other hand, $\abs{v_1^* \times w}$ and $\abs{v_2^* \times w}$ are arbitrarily close to $\abs{v_1^*}\abs{w}$ and $\abs{v_2^*}\abs{w}$, both greater than zero, hence $\abs{v_1^* \times v_2^*} < \varepsilon' \min(\abs{v_1^* \times w}, \abs{v_2^* \times w})$, which will be called Property (Q). In addition, $\abs{v_i^* \times v_c^*} = \abs{\sin (\angle(v_i^*,v_c^*))} \cdot \abs{v_i^*}\abs{v_c^*} \leq 2\abs{\sin( \angle(v_i^*,v_c^*))} \cdot \abs{v_i^*} \frac{\area(C)}{\abs{w}}$ is close to zero, too, and $\abs{v_c^* \times w} = \area(C)$. Therefore the inequalities $\abs{v_i^* \times v_c^*} < \varepsilon' \min(\abs{v_i^* \times w}, \abs{v_c^* \times w})$ hold for $i \in \{1,2\}$ (Properties (R)).\\
Before we proceed with the next step, we will consider the commensurability case. Let $\Lambda_1$ and $n_s \Lambda_2$ be commensurable for some $s \in \Rb$. We want to find $(v_1^*,v_2^*,v_c^*) \in (\Lambda_1^*,\Lambda_2^*,\Lambda_c^*) \in (G_s \times N)(\Lambda_1,\Lambda_2,\Lambda_c)$ with Properties (P), (Q) and (R). Suppose $v_1^*$, $n_s v_2^*$ and $v_c^*$ are parallel vectors. We are interested in almost horizontal vectors. The directions of $n_s v_2^*$ and $v_2^*$ are nearly the same in this case. Suppose that all the angles between $v_1^*$, $v_2^*$, $n_s v_2^*$, $v_c^*$ and $w$ are so small that the sines can be approximated by the angles within a multiplicative error less than 2. As a first calculation we get
\begin{align*}
\abs{v_2^* \times v_c^*} & = \abs{\sin(\angle(v_2^*,v_c^*))} \cdot \abs{v_2^*} \abs{v_c^*} \\
   & = \abs{\sin(\angle(v_2^*,v_c^*))} \cdot \abs{v_2^*} \frac{\area(C)}{\abs{w}} \frac{1}{\abs{\sin(\angle(v_c^*,w))}} \qquad \text{and}\\
\abs{v_2^* \times w} & = \abs{\sin(\angle(v_2^*,w))} \cdot \abs{v_2^*} \abs{w}.
\end{align*}
Let $x$ be the horizontal and $y$ be the vertical coordinate of $n_s v_2^*$. We compute the quotient $\frac{\abs{v_2^* \times v_c^*}}{\abs{v_2^* \times w}}$ and write the sines in terms of $x$ and $y$, using that sine is approximately linear for small angles:
\begin{align*}
\frac{\abs{v_2^* \times v_c^*}}{\abs{v_2^* \times w}} & = \frac{\area(C)}{\abs{w}^2} \frac{\abs{\sin(\angle(v_2^*,v_c^*))}}{\abs{\sin(\angle(v_2^*,w))} \cdot \abs{\sin(\angle(v_c^*,w))}}\\
   & \leq 4\frac{\area(C)}{\abs{w}^2} \frac{\left\lvert \frac{\abs{y}}{\sqrt{x^2+y^2}} - \frac{\abs{y}}{\sqrt{x^2 -2sxy +s^2 y^2}} \right\rvert}{\frac{\abs{y}}{\sqrt{x^2 -2sxy +s^2 y^2}} \frac{\abs{y}}{\sqrt{x^2+y^2}}}\\
   & = 4\frac{\area(C)}{\abs{w}^2} \left\lvert \sqrt{\frac{x^2}{y^2} - \frac{2sx}{y} + s^2} - \sqrt{\frac{x^2}{y^2} +1} \right\rvert\\
   & < \varepsilon'
\end{align*}
for $y < \frac{x}{K}$ with $K$ very large, i.e.~for $n_s v_2^*$ almost horizontal.\\
Choose $v_c^*$ almost horizontal, satisfying $\abs{v_c^* \times w} = \area(C)$. The $(G_s \times N)$-actions enables us to find a $v_2^*$ such that $v_c^*$ and $n_s v_2^*$ are parallel and the above inequality holds. Shortening $v_2^*$ without changing its direction assures $\abs{v_2^* \times v_c^*} < \varepsilon' \abs{v_c^* \times w}$, too. As $\Lambda_1$ and $n_s \Lambda_2$ are commensurable, the vectors $v_1^*$ and $n_s v_2^*$ can be chosen to be parallel, too, and Properties (R) are fulfilled after eventually shortening $v_1^*$. Again we use the group action to shorten $v_1^*$ and $v_2^*$ to make $\abs{v_1^* \times v_2^*} < \text{const} \cdot \abs{v_1^*}\abs{x}$ small compared to $\abs{v_1^* \times w} = \sin(\angle(v_1^*,w)) \cdot \abs{v_1^*}\abs{w}$ and compared to $\abs{v_2^* \times w} \geq \sin(\angle(v_2^*,w)) \cdot \abs{x}\abs{w}$, where the constant only depends on $K$. Property (Q) holds. For a last time we possibly have to shorten $v_1^*$ and $v_2^*$ to make Properties (P) being fulfilled.\\
In both cases -- commensurable and strongly non-commensurable lattices -- we approximate $(v_1^*,v_2^*,v_c^*)$ in $N_\Delta$-orbits of $(\Lambda_1,\Lambda_2,\Lambda_c)$ and note that $N_\Delta$ preserves cross products and leaves $w$ invariant. Hence, there is a $(v_1,v_2,v_c) \in (\Lambda_1,\Lambda_2,\Lambda_c)$ with Properties (P), (Q) and (R). For $\varepsilon'$ small enough, these are good partners and thus give rise to two irrational \fix -splittings (one for $k<0$ and one for $k>0$).\\
The lengths of vectors in a given lattice are bounded from below. Using the equality $\abs{\sin(\angle(x,y))} = \frac{\abs{x \times y}}{\abs{x}\abs{y}}$ we see that $\abs{\max(\angle(v_1,w), \angle(v_1,v_2), \angle(v_1,v_c))}$ is small and therefore we have $\angle(w,w') < \varepsilon$. Furthermore, using Proposition \ref{lem_twist_area}, $\area(T_1 \Delta T_1') < 2\abs{v_1 \times w} + \abs{k}(\abs{v_1 \times v_2} + 2\abs{v_1 \times v_c}) < \varepsilon$ and similarly $\area(T_2 \Delta T_2') \leq \varepsilon$ for small $\varepsilon'$.
\end{proof}


\section{Uncountably many nonergodic directions} \label{section_uncountable}

In this section we build a binary rooted tree such that the geodesics in the tree represent nonergodic directions on $(X,\omega)$. The elements of the tree are directions of irrational splittings of $(X,\omega)$. Our starting point is the direction $w$ of our initial splitting $(T_1,T_2,C,w)$. For every direction $w_n$ at level $n$, called parent, we will construct two different subsequent directions $w_n^1$ and $w_n^2$, called the children, such that these directions give rise to \fix -splittings with $\angle(w_n,w_n^i) < \frac{\varepsilon_n}{4}$ and $\area(T_1 \Delta T_1^i) < \frac{\varepsilon_n}{4}$, where $\varepsilon_n > 0$ is an arbitrary number smaller than all angles between any pair of parents and children constructed so far. In detail, given $w_n$ and $\varepsilon_n$ we apply Proposition \ref{inductive_new_splitting} with $\varepsilon < \frac{\varepsilon_n}{4}$ to find two different children $w_n^1$ and $w_n^2$ with the desired properties. The resulting tree contains $2^n$ directions of irrational splittings at its $n$-th level.\\
The geodesics in this tree represent different converging sequences of splitting directions: the angles between two subsequent directions converge to zero. The series of the changes of area will converge to a value smaller then the limit of the geometric series. We want to apply a lemma of Masur and Smillie and get an uncountable number of nonergodic directions:

\begin{lem} \label{tree_masur_smillie}
Let $(T_1^n,T_2^n,C^n,w_n)$ be a sequence of \fix -splittings of $(X,\omega)$ and assume that the directions of the vectors $w_n$ converge to some direction $\vartheta$. Let $h_n > 0$ be the component of $w_n$ perpendicular to $\vartheta$ and let $a_n$ bound the change of area from above: $a_n=\max(\area(T_1^n \Delta T_1^{n+1}),\area(T_2^n \Delta T_2^{n+1}))$. If
\begin{itemize}
\item $\sum_{n=1}^{\infty} a_n < \infty$,
\item there exists $c>0$ such that $\area(T_1^n) > c$, $\area(T_2^n) > c$ for all $n \in \Nb$ and
\item $\lim_{n \to \infty} h_n =0$.
\end{itemize}
then $\vartheta$ is a nonergodic direction.
\end{lem}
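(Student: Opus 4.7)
My plan is to produce a set $A \subset X$ which is invariant (modulo null sets) under the directional flow $\phi_t$ in direction $\vartheta$ and which has intermediate area $0 < \area(A) < \area(X)$; existence of such an $A$ forces $\phi_t$ to be nonergodic.

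First I would construct $A$ as an $L^1$-limit. Since $\area(T_1^n \Delta T_1^{n+1}) \leq a_n$ and $\sum_n a_n < \infty$, the characteristic functions $\chi_{T_1^n}$ form a Cauchy sequence in $L^1(X)$; a subsequence converges a.e., so the limit is again the indicator $\chi_A$ of a measurable set $A$. Define $B$ analogously from $(\chi_{T_2^n})$. The lower bound $\area(T_i^n) > c$ passes to the $L^1$-limit to yield $\area(A), \area(B) \geq c$, while the disjointness $T_1^n \cap T_2^n = \emptyset$ forces $\chi_A \chi_B$ to be the a.e. limit of $\chi_{T_1^n}\chi_{T_2^n} = 0$, so $A \cap B$ is a null set. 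Hence $c \leq \area(A) \leq \area(X) - c$, and $A$ has intermediate area.

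The main step is verifying $\phi_t$-invariance of $A$ for every $t \in \Rb$. The boundary of $T_1^n$ inside $X$ consists of the two saddle connections $\alpha_+^n$ and $\alpha_-^n$, each parallel to $w_n$ and of length $\abs{w_n}$. The unit vector $\vartheta$ deviates from the direction of $w_n$ by an angle $\theta_n$ with $\sin \theta_n = h_n / \abs{w_n}$, so $\phi_t$ displaces any point by distance at most $t \sin \theta_n = t h_n / \abs{w_n}$ perpendicular to $w_n$. Therefore the symmetric difference $T_1^n \Delta \phi_t(T_1^n)$ lies in a tubular neighborhood of $\alpha_+^n \cup \alpha_-^n$ of transverse width $t h_n / \abs{w_n}$, whose area is bounded by $2 \abs{w_n} \cdot t h_n / \abs{w_n} = 2 t h_n$. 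Combining this with measure-preservation of $\phi_t$ and with the $L^1$-convergence $\chi_{T_1^n} \to \chi_A$ yields
\[ \area(A \Delta \phi_t(A)) \leq 2 \area(A \Delta T_1^n) + 2 t h_n \longrightarrow 0 \qquad (n \to \infty), \]
so $A = \phi_t(A)$ modulo null sets for every $t$, completing the proof.

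The main obstacle is this boundary sweep estimate: one must argue carefully that the set of points in $T_1^n$ whose $\phi_t$-orbit exits $T_1^n$ within time $t$ (together with those entering from outside) fills out a set of area at most $2 t h_n$. This relies on the two saddle connections bounding $T_1^n$ in $X$ being parallel to $w_n$, so that the $\vartheta$-flow is nearly tangent to them, combined with the hypothesis $h_n \to 0$, which forces the transverse sweep to vanish asymptotically.
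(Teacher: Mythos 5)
Your proof is correct in substance, but note that the paper does not actually reprove this lemma: its ``proof'' is a one-line citation of Theorem 2.1 in Masur--Smillie \cite{MasSmi91a}, of which the statement is a direct specialization (the tori $T_1^n$ play the role of the sets bounded by saddle connections in directions converging to $\vartheta$). What you wrote is essentially the standard argument behind that cited theorem: build the limit set $A$ in $L^1$ from the summable exchanges, get intermediate measure from the area lower bound (in fact the auxiliary set $B$ is unnecessary, since $\area(T_1^n)\ge c$ and $\area(T_1^n)\le \area(X)-\area(T_2^n)\le \area(X)-c$ already give $c\le\area(A)\le\area(X)-c$ in the limit), and prove flow invariance from a boundary sweep estimate. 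In that last step your ``tubular neighborhood'' phrasing is slightly loose and should be replaced by the flux formulation: both $T_1^n\setminus\phi_t(T_1^n)$ and $\phi_t(T_1^n)\setminus T_1^n$ are contained in $\bigcup_{0\le s\le \abs{t}}\phi_s\bigl(\alpha_+^n\cup\alpha_-^n\bigr)$, because any orbit segment entering or leaving $T_1^n$ must cross one of the two boundary saddle connections, and the time-$\abs{t}$ sweep of a saddle connection with holonomy $w_n$ under the unit-speed flow in direction $\vartheta$ has area at most $\abs{t}\,\abs{w_n\times\vartheta}=\abs{t}\,h_n$; possible self-overlaps of the sweep and the finitely many singular leaves only decrease the bound or are null, and negative $t$ is handled symmetrically. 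With that, your chain $\area(A\Delta\phi_t(A))\le 2\area(A\Delta T_1^n)+2\abs{t}h_n\to 0$ is exactly the Masur--Smillie invariance argument, so the two routes differ only in that the paper outsources the work to the reference while your version makes transparent why each hypothesis (summable exchange, area bounded below, $h_n\to 0$) is needed.
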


\begin{proof}
The proof can be found in \cite{MasSmi91a}, Theorem 2.1.
\end{proof}

We have to show that in every geodesic the heights $h_n$ perpendicular to the limiting direction of the splitting vectors $w_n$ converge to zero. To see this, we first compute $h_{n+1} \leq \frac{2\abs{w_n \times w_{n+1}}}{\abs{w_{n+1}}}$ using $\sin(\angle(x,y)) = \frac{\abs{x \times y}}{\abs{x}\abs{y}}$ and the fact that $\angle(w_n,\vartheta) \leq 2\angle(w_n,w_{n+1})$. Secondly, we note that the length of $w_n$ tends to infinity. Third ingredient is $\abs{w_n \times w_{n+1}} = \abs{w_n \times (w_n + k (v_1 + v_2 + 2v_c))} \leq 9 (\area(T_1) + \area(T_2) + 2\area(C))$. Combining these three facts, we see that $h_n$ converges to zero.

Thus, this construction gives an uncountable number of nonergodic directions. As there are at most countably many nonminimal directions, we can find uncountably many minimal and nonergodic directions. Theorem \ref{thm_uncountable} is proven.

\bibliographystyle{amsalpha}
\bibliography{/Users/emanuel/Documents/uni/Artikel/bib}

\end{document}